\newcommand\scalemath[2]{\scalebox{#1}{\mbox{\ensuremath{\displaystyle #2}}}}
\newcommand{\N}{{\mathbb N}}
\newcommand{\Z}{{\mathbb Z}}
\newcommand{\Scarf}{\text{Scarf}}
\newcommand{\ord}{\text{ord}}
\newtheorem{theorem}{Theorem}[section]
\newtheorem{example}{Example}[section]
\newtheorem{lemma}{Lemma}[section]
\newtheorem{definition}{Definition}[section]
\newtheorem{prop}{Proposition}[section]
\newtheorem{remark}{Remark}[section]
\definecolor{ffffff}{rgb}{1,1,1}
\definecolor{wwwwww}{rgb}{0.4,0.4,0.4} 
\definecolor{wwwwqq}{rgb}{0.4,0.4,0}
\definecolor{zzqqtt}{rgb}{0.6,0,0.2}
\definecolor{qqqqff}{rgb}{0,0,1}
\definecolor{uququq}{rgb}{0.25,0.25,0.25}
\definecolor{cccccc}{rgb}{0.8,0.8,0.8}
\definecolor{zzzzzz}{rgb}{0.6,0.6,0.6}
\definecolor{wwwwww}{rgb}{0.4,0.4,0.4}
\definecolor{tttttt}{rgb}{0.2,0.2,0.2}
\definecolor{zzttqq}{rgb}{0.27,0.27,0.27}
\definecolor{qqwwcc}{rgb}{0.4,0.4,0.4}
\definecolor{fffftt}{rgb}{0.73,0.73,0.73}
\definecolor{qqzzqq}{rgb}{0.2,0.2,0.2}
\definecolor{ffqqqq}{rgb}{0.33,0.33,0.33}
\begin{document}
\title{A Preliminary Report on Scarf Complexes of Posets}

\begin{abstract}The Scarf complex for lattices is well understood and utilized.  In 2014, the author expanded the use of the term Scarf complex to encompass an infinite set in $\Z^n$ that was generated via an action of a lattice $\Lambda\in\Z^n$ upon a finite subset $A\subset\N^n$.  This paper aims to further generalize the use of Scarf complex by removing the integer lattice completely while maintaining the essence of the Scarf complex.
\end{abstract}
\maketitle
\section{Introduction}

Throughout the literature (e.g. \cite{MS},\cite{PS}), the Scarf complex of a lattice is utilized for its ability to record algebraic data in a combinatorial structure.  In particular, if one has an abelian subgroup $\Lambda$ of $\Z^n$ that intersects the origin only at 0, then under mild conditions, the Scarf complex of $\Lambda$ encodes all the information needed for the minimal free resolution of the lattice ideal $I_{\Lambda}=\{X^{\lambda^+}-X^{\lambda^-} | \lambda\in\Lambda\}\subset k[x_1, \dots, x_n]$.  

The Scarf complex is defined in several equivalent ways, and for the purposes of a combinatorial description, it can be obtained from the concept of neighbors in $\Lambda$.  Two elements $\lambda_1,\lambda_2\in\Lambda$ are neighbors if $(\lambda_1\vee\lambda_2)-\N_{>0}^n\cap\Lambda=\emptyset$.  A subset $A\subseteq\Lambda$ is neighborly if it is pairwise neighborly.  Neighborliness is closed under taking subsets, and $\Scarf(\Lambda)=\{A\subset\Lambda | A \text{ is neighborly}\}/\Lambda$.  We mod out by $\Lambda$ to create a finite (abstract) simplicial complex.

In this paper, we will strip away the excessive structure of $\Z^n$, and concentrate on the most general definition of neighbors possible.  The result will be a Scarf complex that is defined for posets.  Many examples will be given with various posets.

\section{Posets}

Let $(U,\preceq)$ be any poset.  For any element $u\in U$, we will associate the following objects:
\begin{itemize}
\item $\ord(u)$ will denote the order ideal of $u$.  That is, $\ord(u)=\{v\in U | v\preceq u\}$.
\item $\partial \ord(u)=\{v\in \ord(u) | v \text{ has a cover not in } \ord(u)\}\cup \{u\}$.
\item $\ord(u)^{\circ}=\ord(u)-\partial \ord(u)$
\end{itemize}

\begin{definition}Let $(U,\preceq)$ be a poset and let $A\subseteq U$.  If $ B \subseteq A$, we say that $B$ is $A$-neighborly (or just neighborly if there is no confusion) if there exists a $u\in U$ such that $B\subseteq\partial \ord(u)$ and $\ord(u)^{\circ}\cap A=\emptyset$.
\end{definition}

\begin{lemma}\label{closed}
Let $(U,\preceq)$ be a poset and let $A\subseteq U$.  If $B\subset A$ is neighborly, then $B'\subset B$ is neighborly.
\end{lemma}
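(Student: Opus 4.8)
The plan is to exploit the fact that neighborliness is a \emph{witnessed} property: a set is neighborly precisely when a single element $u\in U$ certifies it. I will show that the same witness that certifies $B$ automatically certifies every subset $B'\subseteq B$, so that no new construction is needed.

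First I would unpack the hypothesis. Since $B\subseteq A$ is neighborly, by definition there exists a witness $u\in U$ enjoying two properties: $(i)$ $B\subseteq\partial\ord(u)$, and $(ii)$ $\ord(u)^{\circ}\cap A=\emptyset$. The crucial structural observation is that condition $(ii)$ makes no reference to $B$ whatsoever — it is a statement purely about $u$ and the ambient set $A$.

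Next, given any $B'\subseteq B$, I would propose the \emph{same} $u$ as a witness for $B'$. Condition $(i)$ is monotone under passing to subsets: $B'\subseteq B\subseteq\partial\ord(u)$. Condition $(ii)$ is literally unchanged, since it depends only on $u$ and $A$. Finally I would note that $B'\subseteq B\subseteq A$, so that the phrase ``$A$-neighborly'' is even meaningful for $B'$. Hence the single element $u$ certifies that $B'$ is $A$-neighborly, as required.

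I do not anticipate a genuine obstacle here: the lemma is an immediate consequence of the witness formulation of the definition, and the argument amounts to observing that the two defining conditions split into one part that is monotone in the tested set and one part that is independent of it. The only point demanding (minimal) care is confirming that $B'$ remains a subset of $A$ so that neighborliness of $B'$ is well posed, which is automatic from the chain $B'\subseteq B\subseteq A$.
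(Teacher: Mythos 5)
Your proof is correct and follows exactly the paper's own argument: reuse the witness $u$ for $B$, note that $B'\subseteq B\subseteq\partial\ord(u)$, and observe that the condition $\ord(u)^{\circ}\cap A=\emptyset$ is untouched. Your extra remarks (that the second condition is independent of the tested set, and that $B'\subseteq A$ keeps the statement well posed) are sound elaborations of the same one-line idea.
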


\begin{proof}
We have that $B$ is neighborly, so there exists a $u\in U$ such that $B\subseteq\partial\ord(u)$ and $\ord(u)^{\circ}\cap A=\emptyset$.  Since $B'\subset B$, we also have that $B'\subseteq\partial\ord(u)$, and hence $B'$ is also neighborly.  
\end{proof}

\begin{remark}
\
\begin{enumerate}
\item If we did not limit $A$-neighborliness to elements of $A$, the result would be copious extraneous neighborly pairs.
\item In Lemma \ref{closed}, we used the same $u\in U$ to show that neighborliness is closed under taking subsets.  In general posets, it is not necessary for there to be only one $u$ to show neighborliness.  This is not always the case, though. In particular, for $\alpha_1,\alpha_2\in \Z^n$, if $\alpha_1-\alpha_2$ has no zero components, then the only possible $u\in\Z^n$ that could be used for neighborliness would be $\alpha_1\vee\alpha_2$.
\end{enumerate}
\end{remark} 

Having shown that neighborliness is closed under taking subsets, we have the following definition.

\begin{definition} Let $(U,\preceq)$ be a poset and let $A\subseteq U$.  The Scarf complex of $U$ with respect to $A$ is $$\Scarf(U,A)=\{B\subseteq A | B \text{ is } A-\text{neighborly}\}$$
\end{definition}

If $U=\Z^n$ (or $\N^n$), then we have the specific case studied in \cite{Mc}, \cite{MS}, and \cite{PS}.  We are now in a more general case, though, and we can proceed to study much more general objects.

\section{Examples}
As motivation for this general Scarf complex, we will examine several standard posets and look at the types of complexes we can get from them.
\subsection{Subsets of $\N^n$}\
\\

Because the Scarf complex has its roots in $\N^n$, it makes sense to look at some other cases in that setting.

\begin{example}\label{divisibility}
Let $U=\N$ and for $u,v\in U$, say $u\preceq v$ if $u$ divides $v$.  Then $(U,\preceq)$ forms a poset.

\begin{lemma}
If $(U,\preceq)$ is the divisibility poset with $U=\N$, then for $u\in U$, the covering set of $u$ is $\{up\hspace{.1cm}|\hspace{.1cm}  p \text{ prime }\}$.
\end{lemma}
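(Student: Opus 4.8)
The plan is to prove the two inclusions that together identify the covering set of $u$ with $\{up \mid p \text{ prime}\}$. Recall that in any poset, $w$ covers $u$ precisely when $u \prec w$ and no element $z$ satisfies $u \prec z \prec w$; in the divisibility poset this unwinds to $u \mid w$, $u \neq w$, and the nonexistence of a $z$ with $u \mid z$, $z \mid w$, and $z \notin \{u,w\}$. So the goal is to characterize exactly which $w$ meet these three conditions.

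First I would verify that each candidate $up$ (with $p$ prime) is genuinely a cover. Since $u \mid up$ and $up \neq u$, we have $u \prec up$. To rule out an intermediate $z$, I would write any divisor $z$ of $up$ that is also a multiple of $u$ as $z = um$ and invoke the cancellation $um \mid up \iff m \mid p$ (valid when $u \neq 0$); primality of $p$ then forces $m \in \{1,p\}$, i.e. $z \in \{u, up\}$, so nothing lies strictly between $u$ and $up$.

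For the reverse inclusion I would take an arbitrary cover $w$ of $u$. From $u \prec w$ we may write $w = uk$ with $k > 1$. If $k$ were composite, say $k = ab$ with $a,b > 1$, then $z = ua$ would satisfy $u \prec z \prec w$, contradicting that $w$ covers $u$; hence $k$ is prime and $w = uk$ has the advertised form. Combining the two inclusions yields the stated covering set.

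The main obstacle here is really a bookkeeping point rather than a deep one: the cancellation step $um \mid up \iff m \mid p$ carries the standing hypothesis $u \neq 0$. If one takes $\N$ to include $0$, then $0$ is the top of the poset (every $n$ divides $0$, while $0$ divides only itself), so $0$ has no covers at all and the formula must be read with $u \geq 1$; I would dispose of this edge case at the very start. For $u \geq 1$ the cancellation follows immediately from unique factorization, and the two inclusions complete the argument.
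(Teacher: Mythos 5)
Your proposal is correct, and it is in fact more complete than the paper's own argument. For the inclusion $\{up \mid p \text{ prime}\} \subseteq \{\text{covers of } u\}$ you and the paper argue essentially identically: any $m$ with $u \mid m \mid up$ is written as $m = uq$, cancellation reduces this to $q \mid p$ (the paper writes $qr = p$), and primality forces $m \in \{u, up\}$. But the paper stops there: its proof ends with ``therefore $up$ covers $u$ for all primes $p$,'' which establishes only that each $up$ is a cover, and never shows that every cover has this form --- nothing in the paper's proof rules out, say, $6u$ covering $u$. Your second paragraph supplies exactly this missing half: an arbitrary cover $w = uk$ with $k > 1$ composite, say $k = ab$ with $a, b > 1$, would admit $ua$ strictly between $u$ and $w$, so $k$ must be prime. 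You also flag that the cancellation step silently assumes $u \neq 0$; this matters if one takes $0 \in \N$, since $0$ then has no covers at all while the formula would nominally produce $\{0\}$, so the lemma must be read with $u \geq 1$. In short, the paper's proof is shorter but proves only one of the two inclusions asserted by the set equality in the statement; your version proves both and handles the degenerate case, which is what a complete proof of the lemma requires.
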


\begin{proof}
Clearly, $u$ divides $up$ for all $p$, so $u\preceq up$.  Now suppose $u$ divides $m$ and $m$ divides $up$.  Then there exists $q,r$ such that $m=uq$ and $up=mr$.  Therefore, $uqr=up$ and hence $qr=p$.  Thus, either $r=1$ and $q=p$, or $r=p$ and $q=1$.  If $r=1$, then $m=up$, and if $r=p$, then $m=u$.  Therefore, $up$ covers $u$ for all primes $p$.
\end{proof}

Suppose $A\subset U$ is finite, such that $A=\{a_1, \dots, a_r\}$.  If $a=\prod_1^ra_i$, then $a_i\preceq a$ for all $i$.  Furthermore, $\ord(a)=\{\prod_{i\in I}a_i | I\subseteq [r]\}$ and because there are infinitely many primes, we have infinitely many covers for all elements.  Therefore, $A\subset\partial C_a$, and hence $\ord(a)^{\circ}\cap A=\emptyset$.  Therefore, all (nonempty) subsets of $A$ are neighborly, and hence $\Scarf(U,A)=2^A\setminus\{\}$. Simplicially, this would be the $n$-simplex.
\end{example}

We can also create the opposite effect from Example \ref{divisibility} by creating a poset that has no covers and will result in a Scarf complex comprised entirely of singletons.

\begin{example}\label{singletons}
Let $U$ be the set of prime numbers, and equip $U$ with the divisibility relation, $\preceq$.  Then $(U,\preceq)$ has no comparable elements.  Let $A\subset U$ be a finite subset.  Then for all $a\in A$, we have that $\ord(a)=\{a\}$, $\partial\ord(a)=\{a\}$ and hence $\ord(a)^{\circ}=\emptyset$.  Therefore, all the singletons of $A$ are neighborly, but no pairs are, because $|\ord(a)|=1$ for all $a$.  Thus, $\Scarf(U,A)\cong A$, and the simplicial representation is $|A|$ disjoint points.
\end{example}

\begin{example}\label{powerset}
Let $U=2^{\N}$ and equip $U$ with an ordering defined by inclusion.  That is, if $a,b\in U$, we say $a\preceq b$ if $a\subseteq b$.

For any $A\subseteq U$ and any finite $B\subseteq A$, if we let $B=\{b_1, \dots, b_r\}$, then $\partial\ord(\cup b_i)=2^{\cup b_i}\supset B$.  Furthermore, every finite element of $U$ has infinitely many covers, so $\partial\ord(\cup b_i)=\ord(\cup b_i)$, and hence $\ord(\cup b_i)^{\circ}=\emptyset$.  Therefore, every nonempty finite subset of $A$ is neighborly, and thus $\Scarf{U,A}=2^A\setminus\emptyset$.
\end{example}

\begin{remark}
We can replace $\N$ with any infinite set.  If we consider power sets of finite sets, then we have to be a little careful about the size of the set we are considering neighbors with respect to.
\end{remark}

\subsection{Directed Graphs}\
\\
A type of posets that gives varied Scarf complexes is that which is associated to the collection of acyclic directed graphs.  In the following examples, we will be examining some directed graphs, and in particular, we will look at directed bipartite graphs.  These graphs are exactly the Hasse diagrams of the associated posets, but because we are thinking of directed graphs in these examples, that interpretation will be unnecessary.

Let $G$ be a directed graph with no directed cycles.  If $v,w$ are vertices of $G$, we say $v\preceq w$ if $w$ can be reached via a directed path from $v$.  Since every vertex can be reached from itself (the path of length 0), we have that $\preceq$ is reflexive; since $G$ is acyclic, we have that $\preceq$ is antisymmetric; and by construction, $\preceq$ is transitive.  Hence, we have that $(G,\preceq)$ is a poset.  Henceforth, when we are given a directed graph, we will automatically consider it as a poset on its vertices.

\begin{example}\label{cycle}
Let $G$ be the given graph.

\begin{center}
\begin{tikzpicture}[line cap=round,line join=round,>=triangle 45,x=1.0cm,y=1.0cm]
\draw (0.5400000000000001,5.1580762113533165)-- (1.2900000000000003,3.8590381056766585);
\draw (0.9675,4.417624491117621) -- (0.7980865704891005,4.441057158514988);
\draw (0.9675,4.417624491117621) -- (1.0319134295108996,4.576057158514987);
\draw (0.54,2.56)-- (-0.96,2.56);
\draw (-0.315,2.56) -- (-0.21,2.695);
\draw (-0.315,2.56) -- (-0.21,2.4250000000000003);
\draw (-1.7100000000000004,3.8590381056766594)-- (-0.9599999999999999,5.158076211353317);
\draw (-1.2825,4.599489825912355) -- (-1.2180865704891006,4.4410571585149885);
\draw (-1.2825,4.599489825912355) -- (-1.4519134295108995,4.576057158514988);
\draw (-1.7100000000000004,3.8590381056766594)-- (-0.96,2.56);
\draw (-1.2825,3.1185863854409632) -- (-1.4519134295108995,3.1420190528383296);
\draw (-1.2825,3.1185863854409632) -- (-1.2180865704891006,3.277019052838329);
\draw (0.54,2.56)-- (1.2900000000000003,3.8590381056766585);
\draw (0.9675,3.3004517202356953) -- (1.0319134295108996,3.1420190528383296);
\draw (0.9675,3.3004517202356953) -- (0.798086570489101,3.277019052838329);
\draw (0.5400000000000001,5.1580762113533165)-- (-0.9599999999999999,5.158076211353317);
\draw (-0.315,5.158076211353317) -- (-0.21,5.293076211353316);
\draw (-0.315,5.158076211353317) -- (-0.21,5.023076211353317);
\draw (0.44,5.8) node[anchor=north west] {$a_1$};
\draw (1.4000000000000001,4.24) node[anchor=north west] {$b_1$};
\draw (0.48,2.68) node[anchor=north west] {$a_2$};
\draw (-1.1,2.66) node[anchor=north west] {$b_2$};
\draw (-2.26,4.22) node[anchor=north west] {$a_3$};
\draw (-1.34,5.78) node[anchor=north west] {$b_3$};
\begin{scriptsize}
\draw [fill=black] (-0.96,2.56) circle (1.5pt);
\draw [fill=black] (0.54,2.56) circle (1.5pt);
\draw [fill=black] (1.2900000000000003,3.8590381056766585) circle (1.5pt);
\draw [fill=black] (0.5400000000000001,5.1580762113533165) circle (1.5pt);
\draw [fill=black] (-0.9599999999999999,5.158076211353317) circle (1.5pt);
\draw [fill=black] (-1.7100000000000004,3.8590381056766594) circle (1.5pt);
\end{scriptsize}
\end{tikzpicture}
\end{center}

Then we have the following covering relations:

\begin{center}$\begin{array}{ccc} a_1\preceq b_1 & a_2\preceq b_1 & a_3\preceq b_2 \\
									a_1\preceq b_3 & a_3\preceq b_2 & a_3\preceq b_3
\end{array}$

\end{center}

Using these relations, we compute the order ideals, boundaries and interiors of each element.

\begin{center}
$\begin{array}{llll}
\ord(a_i) = \{a_i\} & \ord(b_1) = \{a_1,a_2,b_1\} & \ord(b_2)=\{a_2,a_3,b_2\} & \ord(b_3)=\{a_1,a_3,b_3\}\\
\partial \ord(a_i) = \{a_i\} & \partial\ord(b_1) = \{a_1,a_2, b_1\} & \partial\ord(b_2)=\{a_2,a_3, b_2\} & \partial\ord(b_3)=\{a_1,a_3, b_3\}\\
\ord(a_i)^{\circ} = \emptyset & \ord(b_1)^{\circ} = \emptyset & \ord(b_2)^{\circ}=\emptyset & \ord(b_3)^{\circ}=\emptyset
\end{array}$
\end{center}

Then we have the following Scarf complexes associated to $G$.

\begin{center}\begin{tabular}{| c | c | c |}
\hline
$A$ & $\Scarf(G,A)$ & simplicial representation\\
\hline
\raisebox{1cm}{$\{a_1,a_2,a_3\}$} & \raisebox{1cm}{\parbox{5cm}{\centering{$\{\{a_1\},\{a_2\},\{a_3\}$,\\ $\{a_1,a_2\},\{a_2,a_3\},\{a_1,a_3\}\}$}}} & \scalemath{.8}{\begin{tikzpicture}[line cap=round,line join=round,>=triangle 45,x=1.0cm,y=1.0cm]
\draw (0.0,-0.0)-- (1.0,2.0);
\draw (1.0,2.0)-- (2.0,0.0);
\draw (2.0,0.0)-- (0.0,0.0);
\begin{scriptsize}
\draw [fill=black] (0.0,0.0) circle (1.5pt);
\draw[color=black] (-0.2,0.0) node {$a_1$};
\draw [fill=black] (1.0,2.0) circle (1.5pt);
\draw[color=black] (1,2.2) node {$a_2$};
\draw [fill=black] (2.0,0.0) circle (1.5pt);
\draw[color=black] (2.3,0.0) node {$a_3$};
\end{scriptsize}
\end{tikzpicture}}\\
\hline
\raisebox{.15cm}{$\{a_1,a_2,b_1\}$} & \raisebox{.15cm}{$\{\{a_1\},\{a_2\},\{b_2\},\{a_2,b_2\}\}$} & \begin{tikzpicture}[line cap=round,line join=round,>=triangle 45,x=1.0cm,y=1.0cm]
\begin{scriptsize}
\draw (1.0,0) -- (2.0,0);
\draw [fill=black] (0.0,0.0) circle (1.5pt);
\draw[color=black] (-.2,0.0) node {$a_1$};
\draw [fill=black] (1.0,0) circle (1.5pt);
\draw [color=black] (1.0,-.2) node {$a_2$};
\draw [fill=black] (2.0,0.0) circle (1.5pt);
\draw[color=black] (2.3,0.0) node {$b_2$};
\end{scriptsize}
\end{tikzpicture}\\
\hline
\raisebox{1cm}{$\{a_1,a_2,a_3,b_1,b_2,b_3\}$} & \raisebox{1cm}{\parbox{5cm}{\centering{$\{\{a_1\},\{a_2\},\{a_3\}$,\\ $\{a_1,a_2\},\{a_2,a_3\},\{a_1,a_3\}\}$}}} & \scalemath{.8}{\begin{tikzpicture}[line cap=round,line join=round,>=triangle 45,x=1.0cm,y=1.0cm]
\begin{scriptsize}
\draw [fill=black] (0.0,0.0) circle (1.5pt);
\draw[color=black] (-0.2,0.0) node {$a_1$};
\draw [fill=black] (1.0,2.0) circle (1.5pt);
\draw[color=black] (1,2.2) node {$a_2$};
\draw [fill=black] (2.0,0.0) circle (1.5pt);
\draw[color=black] (2.3,0.0) node {$a_3$};
\end{scriptsize}
\end{tikzpicture}}\\
\hline
\end{tabular}
\end{center}
\end{example}

\begin{remark}
In this example, we opted to view $G$ as a cycle directed in such a way that the vertices alternated as sinks and sources.  We could just as well have viewed $G$ as a directed bipartite graph.  As we mentioned earlier, this has a natural relation to the Hasse diagram of $G$ as a poset.  The desire to consider this representation of $G$ will be evident in the following proposition.
\end{remark}

\begin{prop}\label{prop1}
Let $G$ be a cycle on $2n$ vertices labeled $a_1, b_1, a_2, b_2, \cdots, b_n, a_n$ such that each $a_i$ is a source and each $b_i$ is a sink.  If $A=\{a_i | i=1, \dots, n\}$, then $\Scarf(G,A)$ has a simplicial representation isomorphic to a cycle on $n$ vertices.
\end{prop}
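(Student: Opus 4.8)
The plan is to reduce the neighborliness condition to a purely combinatorial containment by first computing every boundary set $\partial\ord(u)$, and then to read off the resulting simplicial complex. First I would fix the orientation bookkeeping. Since the cycle alternates sources and sinks, each source $a_i$ emits edges to exactly the two sinks adjacent to it on the cycle, which I will call $b_{i-1}$ and $b_i$ with indices read cyclically modulo $n$; equivalently each sink $b_i$ receives edges precisely from $a_i$ and $a_{i+1}$. From this I would record the order ideals $\ord(a_i)=\{a_i\}$, since each source is minimal, and $\ord(b_i)=\{a_i,a_{i+1},b_i\}$.

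Next I would compute the boundaries and interiors exactly as in Example \ref{cycle}. The covers of $a_i$ are the sinks $b_{i-1},b_i$, neither of which lies in $\ord(a_i)$, so $\partial\ord(a_i)=\{a_i\}$ and $\ord(a_i)^{\circ}=\emptyset$; for a sink, $a_i$ has the cover $b_{i-1}\notin\ord(b_i)$ and $a_{i+1}$ has the cover $b_{i+1}\notin\ord(b_i)$, so $\partial\ord(b_i)=\ord(b_i)$ and $\ord(b_i)^{\circ}=\emptyset$. The key consequence is that every interior is empty, so the condition $\ord(u)^{\circ}\cap A=\emptyset$ is vacuous, and hence $B\subseteq A$ is $A$-neighborly if and only if $B\subseteq\partial\ord(u)$ for some $u\in G$.

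I would then intersect the boundaries with $A$: because $\partial\ord(a_i)\cap A=\{a_i\}$ and $\partial\ord(b_i)\cap A=\{a_i,a_{i+1}\}$, a subset of $A$ is neighborly exactly when it is contained either in some singleton $\{a_i\}$ or in some cyclically adjacent pair $\{a_i,a_{i+1}\}$. This immediately produces the $n$ vertices and the $n$ edges $\{a_i,a_{i+1}\}$ as faces of $\Scarf(G,A)$.

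To finish I must rule out every other subset, and this is the step I expect to require the most care. I would argue through principal filters: if $a_i\preceq u$ then $u$ lies in the up-set $\{a_i,b_{i-1},b_i\}$ of $a_i$, so two distinct sources $a_i,a_j$ admit a common element $u$ with $a_i,a_j\in\ord(u)$ only when $\{i-1,i\}\cap\{j-1,j\}\neq\emptyset$, i.e. only when $a_i$ and $a_j$ are cyclically adjacent. This simultaneously excludes every non-adjacent pair and every set of three or more sources, since a triple would require three sources sharing a single sink, which no $b_i$ supplies. Therefore the complex has vertex set $\{a_1,\dots,a_n\}$ and maximal faces exactly the $n$ cyclic edges $\{a_i,a_{i+1}\}$, which is precisely a cycle on $n$ vertices; one should record the convention $n\ge 3$ so that these $n$ edges are distinct and the object is genuinely a cycle rather than a single edge.
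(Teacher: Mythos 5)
Your proposal is correct and follows essentially the same route as the paper: direct computation of $\ord$, $\partial\ord$, and $\ord^{\circ}$ for every vertex, yielding the singletons and the cyclically adjacent pairs as the only neighborly sets. The one place you improve on the paper is the exclusion step--where the paper simply asserts ``by construction, there are no other vertices'' with the required boundary sets, your principal-filter argument (two sources have a common upper bound only if they share an adjacent sink, and no sink lies above three sources) makes that assertion rigorous, and your remark that $n\ge 3$ is needed for the conclusion to literally be a cycle is a worthwhile caveat the paper omits.
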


\begin{proof}
We can compute it directly, using Example \ref{cycle} as a guide. We first note that for each $a_i$, $\{a_i\}\subset\partial \ord(a_i)=\{a_i\}$, and also that $\ord(a_i)^{\circ}=\emptyset$ for all $a_i$.  Therefore, each $\{a_i\}$ is neighborly.  It remains to show that $\{a_i,a_{i+1}\}$ is a neighborly set for $i=1, \dots, n$ with $a_{n+1}$ identified with $a_1$.  To see this, notice that $\ord(b_i)\supset\{a_i,a_{i+1}\}$ and that $\partial\ord(b_i)=\ord(b_i)$.  Since $\ord(b_i)\cap A=\emptyset$ for all $i$, we conclude that $\{a_i,a_{i+1}\}$ is a neighborly set as required.  Finally, by construction, there are no other vertices that have boundary sets that contain any non-sequential pairs of elements, and no boundary sets of cardinality greater than 2.  Thus, we have exhausted all possible neighborly sets, and have $\Scarf(G,A)=\{\{a_i\},\{a_i,a_{i+1}\}\}$, which can be represented simplicially by the cycle on $n$ vertices.

\end{proof}

\begin{example}\label{bipartite}
Let $G$ be a complete bipartite graph with bipartitions $A$ and $B$ with $|A|=m$ and $|B|=n\geq 2$ directed from $A$ to $B$. That is, $G=k_{m,n}$ with directions. Note that this is identical to a Hasse diagram with two levels such that every element of the second level covers every element of the first level.  Order $G$ by reachability to make a poset.

If $A=\{a_1, \dots, a_m\}, B=\{b_1, \dots, b_n\}$, then $a_i\preceq b_j$ for all $i,j$.  Furthermore, $\ord(b_i)=A\cup\{b_i\}$ and we have that $\partial \ord(b_i)=\ord(b_i)$ because each $a_j$ is covered by $b_{i+1}$ or $b_{i-1}$.  Therefore, $\ord(b_i)^{\circ}=\emptyset$, and hence $B\cap \ord(b_i)^{\circ}=\emptyset$.  Therefore, all subsets of $B$ are $A$-neighborly, including $B$ itself.  Therefore, $\Scarf(G,B)=2^{B}\setminus\{\}$ and has a simplicial representation that is the $|B|$-simplex.
\end{example}

Example \ref{bipartite} leads us to an impressive theorem concerning Scarf complexes.

\begin{theorem}\label{anycomplex}
If $X$ is any connected finite simplicial complex, then there is a directed graph $G$ and a subset of its vertices $A$ such that the simplicial representation of $\Scarf(G,A)$ is $X$.
\end{theorem}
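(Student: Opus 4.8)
The plan is to realize $X$ by a two-level directed graph that generalizes the complete bipartite construction of Example \ref{bipartite}. Let $V=\{v_1,\dots,v_m\}$ be the vertex set of $X$ and let $\mathcal F$ be its set of facets (maximal faces). I would build a directed graph $G$ whose vertices are the \emph{sources} $v_1,\dots,v_m$ together with, for each facet $F\in\mathcal F$, a \emph{sink} $b_F$, drawing a directed edge $v\to b_F$ exactly when $v\in F$. Setting $A=V$, the order ideal of each source is $\ord(v_i)=\{v_i\}$ and that of each sink is $\ord(b_F)=F\cup\{b_F\}$, since in a two-level directed graph the covers of a vertex are precisely its out-neighbors. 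The goal is then to arrange matters so that the nonempty $A$-neighborly subsets of $A$ are exactly the faces of $X$; since neighborliness is closed under subsets by Lemma \ref{closed}, this identifies $\Scarf(G,A)$ with $X$ as abstract simplicial complexes, hence yields the same simplicial representation.

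The one point that needs care is the boundary condition $\ord(b_F)^{\circ}\cap A=\emptyset$, which is what lets $b_F$ certify all subsets of $F$. For $v\in F$ to lie in $\partial\ord(b_F)$ it must have a cover outside $\ord(b_F)$, and if $v$ happens to belong to the single facet $F$ then its only out-neighbor is $b_F$ and the condition fails, throwing $v$ into $\ord(b_F)^{\circ}$. I would remove this obstruction by attaching to each source $v$ one additional \emph{helper sink} $c_v$ with the single edge $v\to c_v$. Then every source acquires a cover, namely $c_v$, lying outside $\ord(b_F)$ for every facet sink, so $\partial\ord(b_F)=\ord(b_F)$ and $\ord(b_F)^{\circ}=\emptyset$, exactly as in Example \ref{bipartite}. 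A short computation shows $\ord(c_v)=\{v,c_v\}$ with $\partial\ord(c_v)=\ord(c_v)$ and $\ord(c_v)^{\circ}=\emptyset$ as well, using that each $v$ lies in at least one facet.

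It then remains to read off $\Scarf(G,A)$, and here I would argue both inclusions. If $B$ is a nonempty face of $X$, choose a facet $F\supseteq B$; since $B\subseteq F=\partial\ord(b_F)\cap A$ and $\ord(b_F)^{\circ}\cap A=\emptyset$, the sink $b_F$ witnesses that $B$ is neighborly. Conversely, suppose $B\subseteq A$ is neighborly via a witness $u$. Then $u$ must be one of the three kinds of vertices in $G$, and in each case $B\subseteq\partial\ord(u)\cap A$ forces $B$ to be a face: a source $v$ gives $B\subseteq\{v\}$, a helper sink $c_v$ gives $B\subseteq\{v\}$, and a facet sink $b_F$ gives $B\subseteq F$. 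In every case $B$ is contained in a facet, hence is a face of $X$.

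I expect the main obstacle to be precisely this converse direction: one must be certain that the auxiliary sinks introduced to satisfy the boundary condition do not themselves create spurious neighborly sets spanning non-faces. The case analysis above is what rules this out, and it hinges on keeping each helper sink $c_v$ attached to a single source so that its boundary meets $A$ only in $\{v\}$. Finally, if one wants $G$ itself to be connected — which is the role of the connectedness hypothesis on $X$ — one checks that the source–sink incidences inherited from the faces of a connected $X$ keep $G$ connected; the construction otherwise goes through verbatim for disconnected $X$ at the cost of a disconnected graph.
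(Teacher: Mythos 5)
Your construction is correct, and it is the same basic strategy as the paper's: build a two-level directed graph whose sources are the vertices of $X$, with sinks certifying the facets, take $A=\{\text{sources}\}$, and check both inclusions. The difference is in the gadget used to force the boundary condition $\partial\ord(b_F)=\ord(b_F)$. The paper decomposes $X$ into maximal faces and realizes each facet as a complete bipartite graph $k_{m,n}$ with $n\geq 2$ sinks (two sinks per facet in Example \ref{theorem}), glued along shared source labels; the duplicate sink attached to the same facet is what gives every source in $\ord(b_F)$ a cover outside $\ord(b_F)$. You instead use a single sink per facet plus one helper sink $c_v$ per vertex, which plays exactly the same role. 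Both devices work, and both graphs have comparable size. Where your write-up genuinely improves on the paper is the converse inclusion: the paper's proof is an algorithm followed by the bare assertion $\Scarf(G,A)\cong X$, with no verification that the witnesses $u$ produce only faces of $X$; your three-way case analysis (source, helper sink, facet sink) supplies precisely the missing argument that no spurious neighborly sets arise. Your closing remark is also accurate: connectedness of $X$ is not needed for the Scarf computation itself (the paper invokes it only to ensure the maximal faces are not isolated vertices, and implicitly to keep $G$ connected), and your construction, like the paper's, goes through for disconnected $X$ at the cost of a disconnected $G$.
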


\begin{proof}
We can give the proof via an algorithm.
\begin{enumerate}
\item Label the vertices of $X$.
\item Decompose $X$ into maximal subsimplices. By connectedness, these will all be 2-simplices or larger.
\item By Example \ref{bipartite}, each of these simplices is realized by a $k_{m,n}$ for $n\geq 2$.
\item Label the $k_{m,n}$'s with the labels of $X$ on the source side, and $x_i$ on the sink side, repeating labels of $X$ as needed, but not repeating any $x_i$.
\item Glue the $k_{m,n}$'s together by the labels to make a directed bipartite graph $G$.
\end{enumerate}

If $A=\{\text{sources}\}$, then $\Scarf(G,A)\cong X$.
\end{proof}

\begin{example}\label{theorem}
Let $X$ be the following simplicial complex composed of a 3-simplex, three 2-simplices and five 1-simplices.

\begin{center}
\begin{tikzpicture}[line cap=round,line join=round,>=triangle 45,x=1.0cm,y=1.0cm]
\fill[fill=gray](0.0,-0.0) -- (2.0,0.0) -- (1.0,1.73) -- cycle;
\draw (0.0,-0.0)-- (2.0,0.0);
\draw (2.0,0.0)-- (1.0,1.73);
\draw (1.0,1.73)-- (0.0,-0.0);
\draw (2.0,0.0)-- (4.0,0.0);
\draw (4.0,0.0)-- (3.0,1.73);
\draw (3.0,1.73)-- (2.0,0.0);
\draw (1,2.4) node[anchor=north] {$b$};
\draw (-0.6,0.0) node[anchor=west] {$a$};
\draw (2,-0.5) node[anchor=south] {$c$};
\draw (3,2.4) node[anchor=north] {$d$};
\draw (4.5,0.0) node[anchor=east] {$e$};
\begin{scriptsize}
\draw [fill=black] (0.0,-0.0) circle (3.0pt);
\draw [fill=black] (2.0,0.0) circle (3.0pt);
\draw [fill=black] (1.0,1.73) circle (3.0pt);
\draw [fill=black] (4.0,0.0) circle (3.0pt);
\draw [fill=black] (3.0,1.73) circle (3.0pt);
\end{scriptsize}
\end{tikzpicture}
\end{center}

Now we decompose $X$ into maximal subsimplices.

\begin{center}
\begin{tikzpicture}[line cap=round,line join=round,>=triangle 45,x=1.0cm,y=1.0cm]
\fill[fill=gray](0.0,-0.0) -- (2.0,0.0) -- (1.0,1.73) -- cycle;
\draw (0.0,-0.0)-- (2.0,0.0);
\draw (2.0,0.0)-- (1.0,1.73);
\draw (1.0,1.73)-- (0.0,-0.0);

\draw (4.0,1.73)-- (3.0,0.0);
\draw (6.5,0.0)-- (5.5,1.73);
\draw (7.5,-0.0)-- (9.5,0.0);

\draw (1,2.4) node[anchor=north] {$b$};
\draw (-0.6,0.0) node[anchor=west] {$a$};
\draw (2,-0.5) node[anchor=south] {$c$};

\draw (3,-0.5) node[anchor=south] {$c$};
\draw (4,2.4) node[anchor=north] {$d$};

\draw (5.5,2.4) node[anchor=north] {$d$};
\draw (7.0,0.0) node[anchor=east] {$e$};

\draw (7.5,-0.5) node[anchor=south] {$c$};
\draw (9.5,-0.5) node[anchor=south] {$e$};

\begin{scriptsize}
\draw [fill=black] (0.0,-0.0) circle (3.0pt);
\draw [fill=black] (2.0,0.0) circle (3.0pt);
\draw [fill=black] (1.0,1.73) circle (3.0pt);

\draw [fill=black] (3.0,0.0) circle (3.0pt);
\draw [fill=black] (4.0,1.73) circle (3.0pt);

\draw [fill=black] (6.5,0.0) circle (3.0pt);
\draw [fill=black] (5.5,1.73) circle (3.0pt);

\draw [fill=black] (7.5,0.0) circle (3.0pt);
\draw [fill=black] (9.5,0.0) circle (3.0pt);
\end{scriptsize}
\end{tikzpicture}
\end{center}

These give rise to the following four labeled bipartite graphs directed from left to right.

\begin{center}
\begin{tikzpicture}[line cap=round,line join=round,>=triangle 45,x=1.0cm,y=1.0cm]
\draw [fill=black] (0.0,1.0) circle (3.0pt);
\draw [fill=black] (0.0,0.0) circle (3.0pt);
\draw [fill=black] (0.0,-1.0) circle (3.0pt);
\draw [fill=black] (1,0.5) circle (3.0pt);
\draw [fill=black] (1,-0.5) circle (3.0pt);

\draw [fill=black] (2.25,0.5) circle (3.0pt);
\draw [fill=black] (2.25,-0.5) circle (3.0pt);
\draw [fill=black] (3.25,0.0) circle (3.0pt);
\draw [fill=black] (3.25, -1) circle (3.0pt);

\draw [fill=black] (4.75,0.5) circle (3.0pt);
\draw [fill=black] (4.75,-0.5) circle (3.0pt);
\draw [fill=black] (5.75,0.0) circle (3.0pt);
\draw [fill=black] (5.75,-1) circle (3.0pt);

\draw [fill=black] (7.25,0.5) circle (3.0pt);
\draw [fill=black] (7.25,-0.5) circle (3.0pt);
\draw [fill=black] (8.25,0.0) circle (3.0pt);
\draw [fill=black] (8.25,-1) circle (3.0pt);

\draw (4.75,0.5) -- (5.75,0);
\draw (4.75,-.5) -- (5.75,0);
\draw (7.25,0.5) -- (8.25,0);
\draw (7.25,-.5) -- (8.25,0); 

\draw (4.75,0.5) -- (5.75,-1);
\draw (4.75,-.5) -- (5.75,-1);
\draw (7.25,0.5) -- (8.25,-1);
\draw (7.25,-.5) -- (8.25,-1); 

\draw (0,1) -- (1,.5);
\draw (0,0) -- (1,.5);
\draw (0,-1) -- (1,.5);
\draw (0,1) -- (1,-.5);
\draw (0,0) -- (1,-.5);
\draw (0,-1) -- (1,-.5);

\draw (2.25,0.5) -- (3.25,0);
\draw (2.25,-.5) -- (3.25,0);
\draw (2.25,0.5) -- (3.25,-1);
\draw (2.25,-.5) -- (3.25,-1);

\draw (-.5,1) node[anchor=west] {$a$};
\draw (-.5,0) node[anchor=west] {$b$};
\draw (-.5,-1) node[anchor=west] {$c$};
\draw (1.8,0.5) node[anchor=east] {$x_1$};
\draw (1.8,-0.5) node[anchor=east] {$x_2$};

\draw (1.75,.5) node[anchor=west] {$c$};
\draw (1.75,-.5) node[anchor=west] {$d$};
\draw (3.95,0) node[anchor=east] {$x_3$};
\draw (3.95,-1) node[anchor=east] {$x_4$};

\draw (4.25,.5) node[anchor=west] {$d$};
\draw (4.25,-.5) node[anchor=west] {$e$};
\draw (6.45,0) node[anchor=east] {$x_5$};
\draw (6.45,-1) node[anchor=east] {$x_6$};

\draw (6.75,.5) node[anchor=west] {$c$};
\draw (6.75,-.5) node[anchor=west] {$e$};
\draw (8.95,0) node[anchor=east] {$x_7$};
\draw (8.95,-1) node[anchor=east] {$x_8$};
\end{tikzpicture}
\end{center}

To complete the example, we glue the bipartite graphs together along their labels to get $G$.

\begin{center}
\begin{tikzpicture}[line cap=round,line join=round,>=triangle 45,x=1.0cm,y=1.0cm]
\draw [fill=black] (0.0,2.0) circle (3.0pt);
\draw [fill=black] (0.0,1.0) circle (3.0pt);
\draw [fill=black] (0.0,0.0) circle (3.0pt);
\draw [fill=black] (0.0,-1.0) circle (3.0pt);
\draw [fill=black] (0.0,-2.0) circle (3.0pt);

\draw [fill=black] (1.0,3.0) circle (3.0pt);
\draw [fill=black] (1.0,2.0) circle (3.0pt);
\draw [fill=black] (1.0,1.0) circle (3.0pt);
\draw [fill=black] (1.0,0.0) circle (3.0pt);
\draw [fill=black] (1.0,-1.0) circle (3.0pt);
\draw [fill=black] (1.0,-2.0) circle (3.0pt);
\draw [fill=black] (1.0,-3.0) circle (3.0pt);
\draw [fill=black] (1.0,-4.0) circle (3.0pt);

\draw (-.5,2) node[anchor=west] {$a$};
\draw (-.5,1) node[anchor=west] {$b$};
\draw (-.5,0) node[anchor=west] {$c$};
\draw (-.5,-1) node[anchor=west] {$d$};
\draw (-.5,-2) node[anchor=west] {$e$};

\draw (1.7,2) node[anchor=east] {$x_2$};
\draw (1.7,1) node[anchor=east] {$x_3$};
\draw (1.7,0) node[anchor=east] {$x_4$};
\draw (1.7,-1) node[anchor=east] {$x_5$};
\draw (1.7,-2) node[anchor=east] {$x_6$};
\draw (1.7,3) node[anchor=east] {$x_1$};
\draw (1.7,-3) node[anchor=east] {$x_7$};
\draw (1.7,-4) node[anchor=east] {$x_8$};

\draw (0,2) -- (1,3);
\draw (0,2) -- (1,2);
\draw (0,1) -- (1,3);
\draw (0,1) -- (1,2);
\draw (0,0) -- (1,3);
\draw (0,0) -- (1,2);
\draw (0,0) -- (1,1);
\draw (0,0) -- (1,0);
\draw (0,0) -- (1,-3);
\draw (0,0) -- (1,-4);
\draw (0,-1) -- (1,1);
\draw (0,-1) -- (1,0);
\draw (0,-1) -- (1,-1);
\draw (0,-1) -- (1,-2);
\draw (0,-2) -- (1,-1);
\draw (0,-2) -- (1,-2);
\draw (0,-2) -- (1,-3);
\draw (0,-2) -- (1,-4);
\end{tikzpicture}
\end{center}

After this construction, $\Scarf(G,\{a,b,c,d,e\})\cong X$.
\end{example}

\subsection{Hasse Diagrams}

If $U$ is a finite poset, then we can create its Hasse diagram and easily read all the covering relations off the diagram.  In fact, we can even read the interior of an order ideal easily from certain diagrams.  In particular, we will first examine the type A positive root poset. The type A positive root poset of size 5, denoted $A_5$ is pictured below.

\begin{center}
\scalemath{.7}{\begin{tikzpicture}[line cap=round,line join=round,>=triangle 45,x=1.0cm,y=1.0cm]
\begin{scriptsize}
\draw [fill=black] (5.0,6.0) circle (3pt);
\draw [fill=black] (4.0,5.0) circle (3pt);
\draw [fill=black] (6.0,5.0) circle (3pt);
\draw [fill=black] (3.0,4.0) circle (3pt);
\draw [fill=black] (5.0,4.0) circle (3pt);
\draw [fill=black] (7.0,4.0) circle (3pt);
\draw [fill=black] (2.0,3.0) circle (3pt);
\draw [fill=black] (4.0,3.0) circle (3pt);
\draw [fill=black] (6.0,3.0) circle (3pt);
\draw [fill=black] (8.0,3.0) circle (3pt);
\draw [fill=black] (1.0,2.0) circle (3pt);
\draw [fill=black] (3.0,2.0) circle (3pt);
\draw [fill=black] (5.0,2.0) circle (3pt);
\draw [fill=black] (7.0,2.0) circle (3pt);
\draw [fill=black] (9.0,2.0) circle (3pt);
\end{scriptsize}
\draw (5,6) -- (1,2);
\draw (6,5) -- (3,2);
\draw (7,4) -- (5,2);
\draw (8,3) -- (7,2);
\draw (5,6) -- (9,2);
\draw (4,5) -- (7,2);
\draw (3,4) -- (5,2);
\draw (2,3) -- (3,2);
\end{tikzpicture}}
\end{center}

We will label elements of $A_n$ as $a_{i,j}$ where $i$ is the number row starting from the bottom row and counting up, and $j$ is the number element in the $i^{th}$ row counting left to right.  As with other posets, we have many differently shaped and sized Scarf complexes that we can obtain from $A_n$ by choosing different subsets.

\begin{prop}\label{m-simplex}
If $A=\ord(a_{n-4,3})\subset A_n$, then $\Scarf(A_n,A)\cong (2n-5)$-simplex.
\end{prop}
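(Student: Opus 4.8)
\section*{Proof proposal}

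The plan is to realize $\Scarf(A_n,A)$ as a full simplex on an explicitly identified vertex set $V\subseteq A$ with $|V|=2n-4$, by producing a single witness that makes $V$ neighborly and then checking that nothing outside $V$ survives. By Lemma \ref{closed}, once I exhibit one $u^{*}\in A_n$ with $V\subseteq\partial\ord(u^{*})$ and $\ord(u^{*})^{\circ}\cap A=\emptyset$, every subset of $V$ is automatically $A$-neighborly, so $2^{V}\setminus\{\emptyset\}\subseteq\Scarf(A_n,A)$; the reverse containment will then follow once I show that the neighborly singletons are exactly the elements of $V$. First I would record $A=\ord(a_{n-4,3})$ as an explicit triangular array, listing its rows $k=1,\dots,n-4$ and the positions each occupies. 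The indices $n-4$ and $3$ are the natural ones here because $A$ is the twice-iterated interior of the whole poset.

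The computational engine is a short lemma describing interiors of principal order ideals in $A_n$. From the fact that the covers of $a_{i,j}$ are $a_{i+1,j-1}$ and $a_{i+1,j}$, one checks directly that $\ord(a_{i,j})^{\circ}=\ord(a_{i-2,j+1})$ for $i\geq 3$ (and is empty for $i\leq 2$); that is, passing to the interior shifts the apex down two rows and right one column. Iterating this also explains the statement's indices, since $A=\ord(a_{n,1})^{\circ\circ}$. Using this identity I would convert the side condition $\ord(u)^{\circ}\cap A=\emptyset$ into a transparent statement about when the shifted triangle $\ord(a_{i-2,j+1})$ is disjoint from the fixed triangle $\ord(a_{n-4,3})$, namely a comparison of the row ranges and the position intervals of the two ideals. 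This reduces the entire problem to elementary interval arithmetic in the indices $(i,j)$.

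With this in hand I would exhibit the witness $u^{*}$ and the vertex set $V=\partial\ord(u^{*})\cap A$ explicitly, reading $\partial\ord(u^{*})$ off the diagram as the two boundary ``arms'' of the triangle $\ord(u^{*})$: a left arm that is a column of constant position, and a right arm that is an anti-diagonal of constant $(\text{row}+\text{position})$. The count $|V|=2n-4$ is then obtained by intersecting these two arms with the array $A$ and adding their lengths, while the side condition $\ord(u^{*})^{\circ}\cap A=\emptyset$ is verified by the interval comparison from the previous step. Lemma \ref{closed} then delivers the full simplex on $V$. For the converse I would show that any $a\in A\setminus V$ fails to be a neighborly singleton: for every candidate $u=a_{i,j}$ with $a\in\partial\ord(u)$, the interior $\ord(u)^{\circ}=\ord(a_{i-2,j+1})$ is forced to meet $A$, so no admissible witness exists. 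The two containments together give $\Scarf(A_n,A)=2^{V}\setminus\{\emptyset\}$, the $(2n-5)$-simplex.

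The step I expect to be the main obstacle is the simultaneous optimization hidden in the choice of $u^{*}$: the witness must be pushed far enough up the poset that its interior $\ord(u^{*})^{\circ}$ entirely clears the central triangle $A$, yet its boundary arms must still sweep out all $2n-4$ vertices of $V$. Balancing these competing demands --- interior-avoidance against boundary-coverage --- together with the converse bookkeeping that eliminates every $a\in A\setminus V$, is where a careful position-by-position case analysis on the triangular array becomes unavoidable. The interior-shift identity $\ord(a_{i,j})^{\circ}=\ord(a_{i-2,j+1})$ is precisely what keeps that analysis finite and mechanical rather than ad hoc.
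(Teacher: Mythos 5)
Your plan hinges on a step that cannot be carried out, and your own interior-shift identity is the cleanest way to see why. Take $A=\ord(a_{n-4,3})$ literally; its row-1 shadow is the position interval $[3,n-2]$, and two principal ideals of $A_n$ intersect precisely when their row-1 shadows intersect. A witness $u=a_{i,j}$ with nonempty interior is admissible only when the shadow $[j+1,\,i+j-2]$ of $\ord(u)^{\circ}=\ord(a_{i-2,j+1})$ misses $[3,n-2]$, i.e.\ only when $i+j\le 4$ or $j\ge n-2$; adding the side-of-poset cases, where the identity degenerates (e.g.\ $\ord(a_{i,1})^{\circ}=\ord(a_{i-1,1})$, not $\ord(a_{i-2,2})$), the full list of admissible witnesses is: the $a_{1,j}$, the $a_{2,j}$, and $a_{3,1},a_{3,n-2}$. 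Every one of these satisfies $|\partial\ord(u)\cap A|\le 3$. Hence every face of $\Scarf(A_n,\ord(a_{n-4,3}))$ has at most three vertices --- the maximal faces are the triangles $\{a_{1,j},a_{1,j+1},a_{2,j}\}$ for $3\le j\le n-3$, a chain of triangles meeting at vertices --- so it is never a simplex on $2n-5$ (let alone your $2n-4$) vertices once $2n-5>3$. The tension you flagged at the end as the ``main obstacle'' (interior-avoidance versus boundary-coverage) is not a delicate optimization to be balanced: it is a contradiction, because enlarging $\ord(u)$ so that its boundary reaches more of $A$ forces $\ord(u)^{\circ}$ to swallow $A$.

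What you could not see blind is that the paper's statement does not match its own proof, and you set out to prove the false literal version. The paper's proof takes the single witness $u=a_{n-2,2}$ and takes the neighborly set to be $\partial\ord(a_{n-2,2})=\ord(a_{n-2,2})\setminus\ord(a_{n-4,3})$, of cardinality $\binom{n-1}{2}-\binom{n-3}{2}=2n-5$; there $\ord(a_{n-4,3})$ plays the role of the interior to be \emph{avoided}, not the vertex set. Indeed, under the literal hypothesis the proof's central claim ``$\ord(a_{n-2,2})^{\circ}\cap A=\emptyset$'' is the exact opposite of your shift identity, which gives $\ord(a_{n-2,2})^{\circ}=\ord(a_{n-4,3})=A$. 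With the corrected hypothesis $A=\partial\ord(a_{n-2,2})$ your framework collapses to the paper's one-line argument: $A\subseteq\partial\ord(a_{n-2,2})$ and $\ord(a_{n-2,2})^{\circ}\cap A=\emptyset$ make $A$ itself $A$-neighborly, so by Lemma \ref{closed} we get $\Scarf(A_n,A)=2^{A}\setminus\{\emptyset\}$, the simplex on $2n-5$ vertices (note the paper indexes simplices by vertex count, so its ``$(2n-5)$-simplex'' has $2n-5$ vertices, not your $2n-4$). Two smaller corrections: your shift identity needs the hypotheses $j\ge 2$ and $i+j\le n$, and your side remark $A=\ord(a_{n,1})^{\circ\circ}$ fails for exactly that reason, since $\ord(a_{n,1})^{\circ}=A_n\setminus\{a_{n,1}\}$, not $\ord(a_{n-2,2})$.
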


\begin{proof}
Diagramatically, we have the following setup.

\begin{center}
\scalemath{.8}{\begin{tikzpicture}[line cap=round,line join=round,>=triangle 45,x=1.0cm,y=1.0cm]
\draw (5.0,4.0)-- (1.41,0.41);
\draw (5.0,4.0)-- (8.68,0.32);
\draw (5.0,2.0)-- (3.0,0.0);
\draw (5.0,2.0)-- (7.0,0.0);
\draw (4.36,5) node[anchor=north west] {$a_{n-2,2}$};
\draw (.65,.5) node[anchor=north west] {$\partial\ord(a_{n-2,2})$};
\draw (4.36,2.6) node[anchor=north west] {$a_{n-4,3}$};
\draw (5,1) node[anchor=north west] {$A$};
\begin{scriptsize}
\draw [fill=black] (5.0,6.0) circle (3pt);
\draw [fill=black] (4.0,5.0) circle (3pt);
\draw [fill=black] (6.0,5.0) circle (3pt);
\draw [fill=black] (3.0,4.0) circle (3pt);
\draw [fill=black] (5.0,4.0) circle (3pt);
\draw [fill=black] (7.0,4.0) circle (3pt);
\draw [fill=black] (2.0,3.0) circle (3pt);
\draw [fill=black] (8.0,3.0) circle (3pt);
\draw [fill=black] (1.0,2.0) circle (3pt);
\draw [fill=black] (9.0,2.0) circle (3pt);
\draw [fill=black] (0.0,1.0) circle (3pt);
\draw [fill=black] (10.0,1.0) circle (3pt);
\draw [fill=black] (5.0,2.0) circle (3pt);
\draw (4,5) -- (5,4);
\draw (3,4) -- (4,3);
\draw (2,3) -- (3,2);
\draw (1,2) -- (2,1);
\draw (6,5) -- (5,4);
\draw (7,4) -- (6,3);
\draw (8,3) -- (7,2);
\draw (9,2) -- (8,1);
\draw (5,6) -- (0,1);
\draw (5,6) -- (10,1);
\end{scriptsize}
\end{tikzpicture}}
\end{center}

By construction, $$|\partial\ord(a_{n-2,2})|={{(n-2)+1}\choose{2}}-{{(n-4)+1}\choose{2}},$$ the difference of the $(n-2)^{nd}$ and $(n-4)^{th}$ triangular numbers.  Simplifying, $|\partial\ord(a_{n-2,2})|=2n-5$.  The diagram shows that $\ord(a_{n-2,2})^{\circ}\cap A=\emptyset$, so we have a neighborly set of size $2n-5$, which gives rise to the simplex on $2n-5$ vertices as required.
\end{proof}

We can also choose other subsets of $A_n$ to get interesting classes of Scarf complexes.

\begin{prop}
If $A=\{a_{1,2},a_{1,3},\dots, a_{1,n-1},a_{2,1},a_{2,n-1}\}\subset A_n$, then $\Scarf(A_n,A)$ is the path on $n$ vertices.
\end{prop}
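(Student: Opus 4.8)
The plan is to compute $\Scarf(A_n,A)$ directly and show that its only faces are the $n$ singletons together with exactly $n-1$ edges, assembled into the path
$$a_{2,1}-a_{1,2}-a_{1,3}-\cdots-a_{1,n-1}-a_{2,n-1}.$$
Throughout I would read the order ideals off the Hasse diagram exactly as in Proposition \ref{m-simplex}: $\ord(a_{i,j})$ is the downward triangle of all elements $\preceq a_{i,j}$, the interior $\ord(a_{i,j})^{\circ}$ is the part lying strictly inside, and $\partial\ord(a_{i,j})$ is the triangle with that interior removed. The one subtlety to flag at the outset is that along the extreme left and right borders of $A_n$ a would-be cover runs off the diagram, so a border element can have its unique cover sitting inside an order ideal. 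Concretely this makes $\ord(a_{2,1})^{\circ}=\{a_{1,1}\}$ and $\ord(a_{2,n-1})^{\circ}=\{a_{1,n}\}$ nonempty even though $a_{2,1},a_{2,n-1}$ have height two, and crucially $a_{1,1},a_{1,n}\notin A$.

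First I would dispatch the easy direction. Each $a\in A$ lies in $\partial\ord(a)$, and $\ord(a)^{\circ}$ is either empty (for the simple roots $a_{1,p}$) or one of $\{a_{1,1}\},\{a_{1,n}\}$ (for the two corner generators), none of which meet $A$; hence all $n$ singletons are neighborly. For the edges I would supply an explicit witness for each. For a consecutive pair $\{a_{1,p},a_{1,p+1}\}$ with $2\le p\le n-2$ take $u=a_{2,p}$, the element covering both: then $\ord(a_{2,p})=\{a_{1,p},a_{1,p+1},a_{2,p}\}=\partial\ord(a_{2,p})$ has empty interior, so the pair is neighborly. For the two end edges take $u=a_{2,1}$ and $u=a_{2,n-1}$, where $\partial\ord(a_{2,1})=\{a_{1,2},a_{2,1}\}$ and $\partial\ord(a_{2,n-1})=\{a_{1,n-1},a_{2,n-1}\}$ while the interiors avoid $A$ by the remark above. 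In fact the $n-1$ height-two elements $a_{2,1},\dots,a_{2,n-1}$ are all valid witnesses and each contributes exactly one of these $n-1$ edges.

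The substance is the reverse direction, that there is nothing else. The crux is the claim that every witness $u$ (every $u$ with $\ord(u)^{\circ}\cap A=\emptyset$) has height at most two. If $\ord(u)$ had height at least three and sat in the generic interior of $A_n$, its strict interior would be a nonempty sub-triangle meeting the bottom row, hence would contain a simple root $a_{1,m}$ with $2\le m\le n-1$, and every such $a_{1,m}$ lies in $A$. The only way to escape this is to run $\ord(u)$ up a border of $A_n$; but climbing the left border to height three or more puts $a_{2,1}$ in the interior, and climbing the right border does the same for $a_{2,n-1}$, both of which are in $A$. Thus every witness has height at most two, so $\partial\ord(u)\cap A$ never exceeds two elements; a short reading of the height-one and height-two cases then shows this intersection is always empty, a single vertex, or one of the $n-1$ edges already found. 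In particular there are no neighborly sets of size three or more, the $1$-skeleton is exactly the path above, and $\Scarf(A_n,A)$ is the path on $n$ vertices. The step demanding the most care is precisely this border analysis: the two extra generators $a_{2,1}$ and $a_{2,n-1}$ were adjoined to the run of simple roots exactly so that ascending either border is blocked at height two, which anchors the two endpoints and prevents the path from closing into the cycle of Proposition \ref{prop1}.
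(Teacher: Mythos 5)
Your proposal is correct and follows essentially the same route as the paper's proof: both directions are handled by computing $\partial\ord(a_{2,p})$ and $\ord(a_{2,p})^{\circ}$ for the $n-1$ height-two elements, with the two border elements $a_{2,1}$, $a_{2,n-1}$ anchoring the endpoints of the path. Where you go beyond the paper is in the converse direction: the paper dismisses it with a one-line assertion that any element whose boundary contains a $3$-element subset of $A$ has interior meeting $A$ (``routine computation''), whereas you prove the sharper organizing claim that \emph{every} witness $u$ with $\ord(u)^{\circ}\cap A=\emptyset$ has height at most two, via the case split between generic ideals (whose interiors contain a simple root $a_{1,m}\in A$) and border-climbing ideals (whose interiors swallow $a_{2,1}$ or $a_{2,n-1}$). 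This is worth keeping: besides justifying the paper's unproved assertion, your height bound also rules out spurious edges on non-consecutive pairs such as $\{a_{1,2},a_{1,4}\}$, which the paper's stated claim about triples does not literally address. Incidentally, your computation $\partial\ord(a_{2,n-1})=\{a_{1,n-1},a_{2,n-1}\}$ with $\ord(a_{2,n-1})^{\circ}=\{a_{1,n}\}$ silently corrects a typo in the paper, which lists $a_{1,n}$ both in the boundary and in the interior (and similarly writes $i\neq 1,n-2$ where $i\neq 1,n-1$ appears to be meant).
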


\begin{proof}
By direct computation, $\partial\ord(a_{2,i})=\{a_{1,i},a_{1,i+1},a_{2,i}\}$ , and $\ord(a_{2,i})^{\circ}=\{\}$ for all $i\neq 1,n-2$.  Additionally, $\partial\ord(a_{2,1})=\{a_{1,2},a_{2,1}\}$, and $\ord(a_{2,1})^{\circ}=\{a_{1,1}\}$, as well as $\partial\ord(a_{2,n-1})=\{a_{1,n},a_{2,n-1}\}$, and $\ord(a_{2,n-1})^{\circ}=\{a_{1,n}\}$.  This gives us the path on $n$ vertices labeled by $a_{2,1}, a_{1,2}, \dots, a_{1,n-1},a_{2,n-1}$.

Routine computation shows that any element of $A_n$ having any 3-element subset of $A$ in its boundary necessary has a nonempty intersection with $A$.
\end{proof}

As a final example, we have as Scarf complexes of $A_n$ the collection of disconnected complexes consisting a path of a certain length and two disconnected vertices.

\begin{prop}
If $A=\{a_{i,1}, \dots, a_{i,n-i+1}\}\subset A_n$ for $i\leq n-2$, then $\Scarf(A_n,A)$ is a path on $n-i-1$ vertices together with 2 disjoint vertices.
\end{prop}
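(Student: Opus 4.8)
The plan is to reduce everything to the triangular shape of the order ideals in $A_n$ together with a single boundary-versus-interior computation. First I would record the covering relations: $a_{i,j}$ covers exactly $a_{i-1,j}$ and $a_{i-1,j+1}$, so dually the covers of $a_{i,l}$ are precisely $a_{i+1,l-1}$ and $a_{i+1,l}$, whichever exist (recall row $i$ runs over positions $1 \le l \le n-i+1$, and row $i+1$ over $1\le l\le n-i$). From this I would describe $\ord(a_{k,p})$ as the triangular region whose intersection with row $i$ is exactly $\{a_{i,p}, a_{i,p+1}, \dots, a_{i,p+k-i}\}$ for $k \ge i$, and whose intersection with row $i+1$ is $\{a_{i+1,p}, \dots, a_{i+1,p+k-i-1}\}$. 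These two row-slices are all the information the argument will require.

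Next I would dispatch the vertices. Each singleton $\{a_{i,l}\}$ is neighborly using $u = a_{i,l}$ itself: $a_{i,l}$ is the apex of $\ord(a_{i,l})$ and hence lies in $\partial\ord(a_{i,l})$, while the only row-$i$ element of $\ord(a_{i,l})$ is that apex, so $\ord(a_{i,l})^{\circ} \cap A = \emptyset$. Thus all $n-i+1$ vertices of $A$ appear in $\Scarf(A_n,A)$.

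Then comes the crux: for any $u = a_{k,p}$ with $k > i$ and any row-$i$ element $a_{i,l} \in \ord(u)$, the element $a_{i,l}$ lies in $\ord(u)^{\circ}$ iff both of its covers lie in $\ord(u)$, which I would translate into inequalities on $p,k,l$ via the two slices above. The decisive observations are twofold. First, $a_{i,1}$ has only the single cover $a_{i+1,1}$, and whenever $\ord(a_{k,p})$ contains $a_{i,1}$ it necessarily contains $a_{i+1,1}$, so $a_{i,1}$ is always interior; symmetrically, the containment condition for the rightmost element $a_{i,n-i+1}$ coincides exactly with that for its unique cover $a_{i+1,n-i}$, so $a_{i,n-i+1}$ is always interior. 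Hence neither extreme vertex can lie in $\partial\ord(u)$ for any triangle $u$ of height $>i$, and no pair containing $a_{i,1}$ or $a_{i,n-i+1}$ is neighborly, so these two vertices are isolated. Second, if $\ord(u)$ contains two row-$i$ elements $a_{i,l}, a_{i,m}$ with $m \ge l+2$, then it contains $a_{i,l+1}$ together with both of its covers, forcing $a_{i,l+1} \in \ord(u)^{\circ} \cap A$; so no non-consecutive pair is neighborly.

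Finally I would treat the consecutive pairs. For $\{a_{i,l}, a_{i,l+1}\}$ the minimal candidate is $u = a_{i+1,l}$, whose order ideal meets row $i$ in exactly these two elements; checking the covers shows $a_{i,l}$ is on the boundary iff $l \ge 2$ and $a_{i,l+1}$ is on the boundary iff $l \le n-i-1$, so $\{a_{i,l},a_{i,l+1}\}$ is neighborly exactly for $2 \le l \le n-i-1$. Combined with the two observations above and with Lemma~\ref{closed} (a triple would contain a non-consecutive pair, hence cannot be neighborly), this exhausts all neighborly sets: the edges are precisely $\{a_{i,l},a_{i,l+1}\}$ for $2 \le l \le n-i-1$, giving a path on the $n-i-1$ vertices $a_{i,2}, \dots, a_{i,n-i}$, while $a_{i,1}$ and $a_{i,n-i+1}$ remain isolated. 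The hypothesis $i \le n-2$ guarantees the row has at least three vertices so this description is meaningful, degenerating to three isolated points when $i=n-2$. The main obstacle is making the ``intermediate or extreme element is forced interior'' argument uniform over \emph{all} containing triangles $u = a_{k,p}$ rather than only the minimal one; this rests on the coincidence of the containment inequalities for an extreme row-$i$ element and its unique cover, which is the point I would verify most carefully.
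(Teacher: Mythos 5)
Your proposal is correct and takes essentially the same approach as the paper: the singletons are witnessed by the elements $a_{i,l}$ themselves, the path edges by the row-$(i+1)$ elements $a_{i+1,l}$ for $2\le l\le n-i-1$, and the two extreme vertices $a_{i,1}$, $a_{i,n-i+1}$ are isolated. The only difference is that you carry out explicitly the completeness check (forced interiority of the extreme vertices in any larger order ideal, and of intermediate row-$i$ elements for non-consecutive pairs) that the paper compresses into the phrase ``routine computation.''
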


\begin{proof}
We begin first with $i<n-2$. For the path, we choose the elements $\{a_{i+1,2},\dots, a_{i+1,n-i-1}\}$ to take the boundaries of, and for the disjoint points, we choose the elements $\{a_{i,1},a_{1,n-i+1}\}$.

If $i=n-2$, then we have a collection of $n-2$ disjoint points.  To get this, choose points $\{a_{n-1,1},a_{n-1,2}\}$.  The remainder is a routine computation.
\end{proof}

\section{Conclusion}

Scarf complexes from posets is a very new idea, and has yet to prove to be anything more than an interesting combinatorial trick.  However, the algebraic fruit borne from $\Scarf(\Z^n,\Lambda)$ when $\Lambda$ is a subgroup of $\Z^n$ intersecting $\N^n$ only at the origin certainly warrants a detailed look at the more generalized structure.

There are several avenues the author is currently pursuing in this work.  The first is to establish what classes of posets yield meaningful collections of Scarf complexes.  Lattices are the most promising posets in this direction.  Having unique suprema between elements is akin to what happens in the $\Lambda\subset\Z^n$ case.  

In another direction, it can be shown that if $U$ and $V$ are disjoint posets with $A\subset U$ and $B\subset V$, then $\Scarf(U+V,A+B)\cong\Scarf(U,A)\cup\Scarf(V,B)$ where $+$ is the direction sum of posets.  Similarly, if $\oplus$ is the ordinal sum of posets, then $\Scarf(U\oplus V,A\oplus B)\cong\Scarf(U,A)\cup\Scarf(V,B)$, as well.  It is not yet known what similar statements can be made about Cartesian products or ordinal products of posets.

It is the hope of the author to find algebraic structures generalizing the resolutions of monomial and binomial ideals that mirror the generalization on the combinatorial structures.

\end{document}